\theoremstyle{definition}
\newtheorem{theorem}{Theorem}[section]
\newtheorem{lemma}[theorem]{Lemma}
\newtheorem{example}[theorem]{Example}
\newtheorem{formulation}[theorem]{Formulation}
\newtheorem{proposition}[theorem]{Proposition}
\newtheorem{definition}[theorem]{Definition}
\newtheorem{remark}[theorem]{Remark}
\numberwithin{equation}{section}
\begin{document}

\renewcommand{\bf}{\bfseries}
\renewcommand{\sc}{\scshape}

\title[Transversal motion planning]%
{Transversal motion planning}

\author{Cesar A. Ipanaque Zapata}
\address{Departamento de Matem\'{a}tica, Universidade de S\~{a}o Paulo
Instituto de Matem\'{a}tica e  Estatística -IME/USP, R. do Matão, 1010 - Butantã, CEP:
05508-090 - S\~{a}o Paulo, Brasil}
\email{cesarzapata@usp.br}

\author{Fernando R. Chu Rivera} 
\address{Facultad de Ciencias Matemáticas - FCM-UNMSM,
Ciudad Universitaria - UNMSM, Av. República de Venezuela 3400, Cercado de Lima, Lima, Perú}
\email{fernando.chu@unmsm.edu.pe}

\subjclass[2010]{Primary 55M30, 57Q65; Secondary 14F35.}    

\keywords{Transversality, Topological complexity, Motion Planning, Robotics}
\thanks {The first author would like to thank grant\#2022/03270-8, S\~{a}o Paulo Research Foundation (FAPESP) for financial support.}

\begin{abstract} In this paper, we introduce the notion of transversal topological complexity (TTC) for a smooth manifold $X$ with respect to a submanifold of codimension 1 together with basic results about this numerical invariant. In addition, we present several examples of explicit transversal algorithms.  
\end{abstract}
\maketitle

\section{Introduction}\label{secintro}

Let $X$ be the space of all possible obstacle-free configurations or states of a given autonomous system. A \emph{motion planning algorithm on $X$} is a function which, to any pair of configurations $(C_1,C_2)\in X\times X$, assigns a continuous motion $\mu$ of the system, so that $\mu$ starts at the given initial state $C_1$ and ends at the final desired state $C_2$. The fundamental problem in robotics, \textit{the motion planning problem}, deals with how to provide, to any given autonomous system, with a motion planning algorithm. 

\medskip
For practical purposes, a motion planning algorithm should depend continuously on the pair of points $(C_1,C_2)$. Indeed, if the autonomous system performs within a noisy environment, absence of continuity could lead to instability issues in the behavior of the motion planning algorithm. Unfortunately, a (global) continuous motion planning algorithm on a space $X$ exists if and only if $X$ is contractible. Yet, if $X$ is not contractible, we could care about finding \emph{local} continuous motion planning algorithms, i.e., motion planning algorithms $s$ defined only on a certain open subset of $X\times X$, to which we refer as the domain of definition of $s$. In these terms, a \emph{motion planner on $X$} is a set of local continuous motion planning algorithms whose domains of definition cover $X\times X$. The \emph{topological complexity of $X$} \cite{farber2003topological}, TC$(X)$, is then the minimal cardinality among motion planners on $X$, while a motion planner on $X$ is said to be \emph{optimal} if its cardinality is TC$(X)$. The design of explicit motion planners that are reasonably close to optimal is one of the challenges of modern robotics (see, for example Latombe \cite{latombe} and LaValle \cite{lavalle}). 

\medskip In more detail, the components of the motion planning problem via topological complexity are as follows (see \cite{zapata2020}):
\begin{formulation} Ingredients in the motion planning problem via topological complexity:
\begin{enumerate}
    \item The obstacle-free configuration space $X$. The topology of this space is assumed to be fully understood in advance. 
    \item Query pairs $C=(C_1,C_2)\in X\times X$. The point $C_1{}\in X$ is designated as the initial configuration of the query. The point $C_2{}\in X$ is designated as the goal configuration.  
\end{enumerate}
In the above setting, the goal is to either describe a motion planner, i.e., describe
\begin{enumerate}\addtocounter{enumi}{2}
\item An open covering $U=\{U_1,\ldots,U_k\}$ of $X\times X$;
\item For each $i\in\{1,\ldots,k\}$, a local continuous motion planning algorithm, i.e., a continuous map $s_i\colon U_i\to X^{[0,1]}$ satisfying $$s_i(C)\left(j\right)=C_{j+1},\quad j=0,1$$ for any $C=(C_1,C_2){}\in U_i$ (here $X^{[0,1]}$ stands for the free-path space on $X$ joint with the compact-open topology),
\end{enumerate}
or, else, report that such an planner does not exist.
\end{formulation}

Let $X$ be a smooth manifold and $Z\subset X$ be a submanifold with codimension $1$.
\begin{definition}
    A path $\Gamma:[0,1]\to X$ is \textit{semi-transversal} to $Z$, denoted by $\Gamma\pitchfork_s Z$, if $\Gamma(t)\in Z$ with $t\in (0,1)$, implies that $\Gamma$ is smooth in $t$ and $\langle\Gamma'(t)\rangle+T_{\Gamma(t)}Z=T_{\Gamma(t)}X$ (equivalently, $\Gamma'(t)\notin T_{\Gamma(t)}Z$), where $\langle\Gamma'(t)\rangle$ is the subspace generated by $\Gamma'(t)$. Note that, if $\Gamma$ is transversal to $Z$ (we will denote $\Gamma\pitchfork Z$) then $\Gamma\pitchfork_s Z$. 

\end{definition}
\begin{example}
    The path $\Gamma\colon [0,1]\to\mathbb{R}^2$ given by $$\Gamma(t)=\begin{cases}
    (1-2t)(-1,0)+2t(0,1),& \hbox{for $0\leq t\leq 1/2$;}\\
    (2-2t)(0,1)+(1-2t)(1,0),&\hbox{for $1/2\leq t\leq 1$;}
    \end{cases}
    $$ is semi-transversal to $Z=\{(x,x):~x\in\mathbb{R}\}$ but is not transversal to $Z$. In fact, $\Gamma$ is not smooth in $t=1/2$.
\end{example}

$$
\begin{tikzpicture}[x=.9cm,y=.9cm]
\draw(-2,0)--(2,0);
\draw(0,-2)--(0,2);
\draw(-2,-2)--(2,2);
\draw[->](-1,0)--(0,1); 
\draw[->](0,1)--(1,0); 
\node [below] at (-1,0) {\tiny$(-1,0)$};
\node [below] at (1,0) {\tiny$(1,0)$};
\node [above] at (0,1) {\tiny$(0,1)$};
\node [above] at (1,1) {\tiny$Z$};
\end{tikzpicture}
$$

\medskip
Investigation of the problem of transversal motion planners for a robot, with state space $X$, leads us to study the pair $(X,Z)$, where $Z$ is a submanifold of $X$ with codimension $1$. The submanifold $Z$ may characterize some desired geometry for the motion planning algorithm.  A (local) transversal motion planning algorithm in $X$ with respect to $Z$ assigns to any pair of configurations $(C_1,C_2)$ in (an open set of) $X\times X$ a path of configurations \[\Gamma(t)\in X,~~t\in [0,1],\]  such that $\Gamma\left(i\right)=C_{i+1}$ for $i=0, 1$ and $\Gamma\pitchfork_s Z$.

\medskip
In this work we introduce the notion of transversal topological complexity together with basic results about this numerical invariant. Proposition~\ref{thm} together with Example~\ref{exam:non-transversal} give the motivation to introduce the tranversal topological complexity (Definition~\ref{def:ttc}). Examples~\ref{exam:transversal-rn} and \ref{exam:rd-spheres} present transversal algorithms in euclidean spaces. We define the notion of transversal LS category (Definition~\ref{def:tcat}) and present a lower bound for transversal complexity in terms of transversal LS category (Proposition~\ref{lower-bound}). Proposition~\ref{prop:difeo} shows an explicit construction of transversal algorithms through diffeomorphisms. Examples~\ref{exam:2-4} and \ref{exam:concrete-transversal-alg} show that submanifold $Z$ may imply some desired geometry for the motion planning algorithm.  

\section{Transversal topological complexity}
In this section we present the notion of transversal topological complexity (Definition~\ref{def:ttc}) together with basic results about this numerical invariant (Lemma~\ref{carac} and Propositions~\ref{lower-bound} and~\ref{prop:difeo}). Several examples are presented to illustrate the result arising in this field (Examples~\ref{exam:transversal-rn},~\ref{exam:rd-spheres},~\ref{exam:2-4} and~\ref{exam:concrete-transversal-alg}). Examples~\ref{exam:2-4} and \ref{exam:concrete-transversal-alg} show that submanifold $Z$ may imply some desired geometry for the motion planning algorithm.  

Let $X^{[0,1]}$ stand for the free-path space of a topological space $X$. Recall that Farber's topological complexity $\text{TC}(X)$ is the sectional category of the end-points evaluation fibration $e_{2}^X\colon X^{[0,1]}\to X\times X$, $e_{2}^X(\gamma)=(\gamma(0),\gamma(1))$. We use sectional category of a fibration $p\colon E\to B$ in the non reduced sense, i.e., it is the minimal number of open sets covering $B$ and on each of which $p$ admits a local continuous section.

 From the Whitney approximation theorem \cite[Theorem 6.26]{lee2012} we obtain the following statement.

\begin{lemma}\label{approximation}
   Let $X$ and $Y$ be smooth manifolds (without boundary), $f,g:X\to Y$ be smooth maps such that there is a continuous homotopy $H:X\times [0,1]\to Y$ with $H_0=f$ and $H_1=g$, then there exists a smooth homotopy $G:X\times [0,1]\to Y$ with $G_0=f$ and $G_1=g$. 
\end{lemma}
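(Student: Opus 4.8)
The plan is to deduce the lemma from the relative form of the Whitney approximation theorem \cite[Theorem 6.26]{lee2012}, which asserts that a continuous map between smooth manifolds (without boundary) is homotopic to a smooth map, and that if the map is already smooth on a neighbourhood of a closed subset, then the homotopy can be chosen relative to that subset. To stay inside the boundaryless setting required by this theorem, I would run the argument on $X\times\mathbb{R}$ rather than on $X\times[0,1]$.

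First I would replace $H$ by a reparametrised homotopy that is literally constant, equal to $f$ near one end and to $g$ near the other. Concretely, set
\[
\widetilde{H}(x,t)=\begin{cases} f(x),& t\le \tfrac14,\\ H\big(x,\,2t-\tfrac12\big),& \tfrac14\le t\le \tfrac34,\\ g(x),& t\ge \tfrac34.\end{cases}
\]
Since $H_0=f$ and $H_1=g$, the three pieces agree on the overlaps, so $\widetilde{H}\colon X\times\mathbb{R}\to Y$ is a well-defined continuous map which, restricted to $X\times[0,1]$, is a homotopy from $f$ to $g$. On the open set $X\times\big((-\infty,\tfrac14)\cup(\tfrac34,\infty)\big)$ the map $\widetilde{H}$ equals $(x,t)\mapsto f(x)$ or $(x,t)\mapsto g(x)$, each of which is smooth (being $f$ or $g$ composed with the projection $X\times\mathbb{R}\to X$). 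Hence $\widetilde{H}$ is smooth on an open neighbourhood of the closed set $A:=X\times\big((-\infty,0]\cup[1,\infty)\big)$.

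Now I would apply the relative Whitney approximation theorem to $\widetilde{H}\colon X\times\mathbb{R}\to Y$ with this closed set $A$, obtaining a smooth map $G\colon X\times\mathbb{R}\to Y$ that is homotopic to $\widetilde{H}$ relative to $A$. In particular $G$ and $\widetilde{H}$ agree on $A$, so $G(x,0)=f(x)$ and $G(x,1)=g(x)$ for all $x\in X$. Restricting $G$ to $X\times[0,1]$ then yields the desired smooth homotopy with $G_0=f$ and $G_1=g$.

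The point that actually needs care is not the appeal to Whitney's theorem but the preliminary step: arranging that the merely continuous homotopy $H$ becomes smooth on a full open neighbourhood of $X\times\{0,1\}$ \emph{without} disturbing the two end maps $f,g$. The constant-near-the-ends reparametrisation above does exactly this, and as a bonus it sidesteps any discussion of the boundary of $X\times[0,1]$; everything after that is a direct invocation of \cite[Theorem 6.26]{lee2012}.
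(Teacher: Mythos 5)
Your proof is correct, and it uses the same key tool the paper invokes — Lee's Whitney approximation theorem — so it is essentially the intended argument, fleshed out. The paper gives no proof beyond the citation, so your write-up supplies exactly the missing details: the reparametrisation that freezes the homotopy near the ends is the right preliminary step (it both makes $\widetilde{H}$ smooth on a genuine open neighbourhood of the closed set $A$, as the relative form of the theorem requires, and lets you work on the boundaryless manifold $X\times\mathbb{R}$ so that Theorem 6.26 applies verbatim), and restricting the resulting smooth map back to $X\times[0,1]$ gives the desired $G$ with $G_0=f$, $G_1=g$.
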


From \cite[Pg. 73]{pollack2010} we have the following statement.

\begin{lemma}\label{transversal}
Let $f:X\to Y$ be a smooth map and $Z\subset Y$ be a submanifold such that the boundary map $\partial f:\partial X\to Y$ is transversal to $Z$, then there exists a smooth map $g:X\to Y$ homotopic to $f$ such that $\partial g=\partial f$ and $g\pitchfork Z$.
\end{lemma}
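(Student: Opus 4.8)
The plan is to run the classical Sard-theoretic argument behind the Transversality Homotopy (Extension) Theorem, taking care that the perturbation is switched off near $\partial X$ so that $f$ is literally unchanged there. I would begin with the elementary remark that the set $V=\{x\in X:\ f\pitchfork Z\text{ at }x\}$ is open and contains $\partial X$: the hypothesis $\partial f\pitchfork Z$ gives $df_x(T_xX)+T_{f(x)}Z=T_{f(x)}Y$ for each $x\in\partial X$ (passing from $T_x\partial X$ to $T_xX$ only enlarges the image), and surjectivity of this composite, together with $Z$ being closed, is an open condition on $x$. Hence there is an open $U$ with $\partial X\subseteq U\subseteq V$, and I may fix a smooth $\gamma\colon X\to[0,1]$ that vanishes on a neighborhood $W$ of $\partial X$ with $\overline{W}\subset U$ and equals $1$ on $X\setminus U$.

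Next I would realize $Y$ as an embedded submanifold of some $\mathbb{R}^M$ (Whitney embedding), take a tubular neighborhood $\mathcal{N}$ of $Y$ in $\mathbb{R}^M$ with its smooth submersive retraction $\pi\colon\mathcal{N}\to Y$, and choose a smooth $\varepsilon\colon Y\to(0,\infty)$ small enough that the Euclidean ball $B(y,\varepsilon(y))\subset\mathcal{N}$ for all $y\in Y$. With $B\subset\mathbb{R}^M$ the open unit ball, define the smooth map
$$F\colon X\times B\to Y,\qquad F(x,s)=\pi\!\left(f(x)+\gamma(x)\,\varepsilon(f(x))\,s\right),$$
which is legitimate since $f(x)+\gamma(x)\varepsilon(f(x))s\in B(f(x),\varepsilon(f(x)))\subset\mathcal{N}$. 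The crucial claim is $F\pitchfork Z$ and $\partial F:=F|_{\partial X\times B}\pitchfork Z$: where $\gamma(x)>0$, the map $s\mapsto F(x,s)$ is $\pi$ composed with an affine diffeomorphism of $B$ onto $B(f(x),\gamma(x)\varepsilon(f(x)))$, hence a submersion, so $F$ and $\partial F$ are submersions at every such $(x,s)$; where $\gamma(x)=0$ we have $F(x,s)=f(x)$, transversal to $Z$ because $\{\gamma=0\}\subseteq U\subseteq V$, while $\partial F(x,s)=\partial f(x)\pitchfork Z$ by hypothesis.

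Finally I would apply the parametric transversality theorem (a consequence of Sard's theorem): from $F\pitchfork Z$ and $\partial F\pitchfork Z$ it follows that for almost every $s\in B$ both $g:=F(\cdot,s)\colon X\to Y$ and $\partial g=F(\cdot,s)|_{\partial X}$ are transversal to $Z$; fix such an $s$. Then $g$ is smooth with $g\pitchfork Z$, and on $W$ we have $\gamma\equiv0$, so $g=f$ on $W$ and in particular $\partial g=\partial f$; moreover $H(x,t)=F(x,ts)$ is a smooth homotopy from $H_0=F(\cdot,0)=f$ to $H_1=g$ (in fact rel $\partial X$, since $\gamma$ vanishes there). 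I expect the genuinely technical points to be the construction of the submersive tubular retraction $\pi$ — on which the submersivity of $F$ rests — and the boundary bookkeeping, namely using $\partial f\pitchfork Z$ both to keep the untouched collar transversal and to guarantee transversality of the perturbed boundary maps, as the parametric theorem requires.
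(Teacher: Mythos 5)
The paper does not give a proof of this lemma: it is quoted directly from Guillemin--Pollack (p.~73), where it appears as a corollary of the Transversality Extension Theorem, so there is no in-paper argument to compare against. Your proposal reproduces the standard proof of that theorem, and its architecture --- embed $Y$ in $\mathbb{R}^M$, take a submersive tubular retraction $\pi$, form $F(x,s)=\pi\bigl(f(x)+\gamma(x)\varepsilon(f(x))s\bigr)$, show $F$ and $\partial F$ are transversal to $Z$, and invoke parametric transversality together with the homotopy $H(x,t)=F(x,ts)$ --- is the right one. You are also right to flag that the openness of the transversality locus requires $Z$ to be closed, a hypothesis the lemma as stated in the paper suppresses but which is present in Guillemin--Pollack.

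There is, however, a genuine gap where you assert $F\pitchfork Z$ on $\{\gamma=0\}\times B$. The identity $F(x,s)=f(x)$ for $\gamma(x)=0$ does not make $dF_{(x,s)}$ equal to $df_x\oplus 0$: differentiating $\gamma(x)\varepsilon(f(x))s$ in $x$ leaves the term $(d\gamma_x(v))\,\varepsilon(f(x))\,s$, so at such a point
\[
D_xF(v)=df_x(v)+(d\gamma_x(v))\,\varepsilon(f(x))\,d\pi_{f(x)}(s),
\]
a rank-one perturbation of $df_x$ that can be nonzero on the topological boundary of $\{\gamma=0\}$, which lies in the interior of $X$ where $d\gamma$ need not vanish. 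Transversality of $f$ at $x$ does not survive such a perturbation: with $\dim X=\dim Y=2$, $Z$ a curve, $df_x$ of rank one with image complementary to $T_{f(x)}Z$, and $d\gamma_x\neq 0$, one can choose $s$ so that the extra term exactly cancels $df_x$, giving $D_xF(T_xX)\subset T_{f(x)}Z$ and $F\not\pitchfork Z$ at $(x,s)$. Since the parametric transversality theorem needs $F\pitchfork Z$ at \emph{every} point of $F^{-1}(Z)$, the argument as written does not close. The standard repair --- and the one Guillemin--Pollack actually use --- is to replace $\gamma$ by $\gamma^2$: then $d(\gamma^2)_x=2\gamma(x)\,d\gamma_x=0$ wherever $\gamma(x)=0$, so $D_xF=df_x$ there and your appeal to $\{\gamma^2=0\}\subset U\subset V$ goes through. (Your treatment of $\partial F$ is already correct, since you arranged $\gamma\equiv 0$ on an \emph{open} neighborhood of $\partial X$, forcing $d\gamma=0$ along $\partial X$.)
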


Then, we have the following statement

\begin{proposition}\label{thm}
Let $X$ be a smooth manifold and $Z\subset X$ be a submanifold of codimension $1$, then $\text{TC}(X)$ coincides with the smallest positive integer $k$ for which  the product $X\times X$ is covered by $k$ open subsets $X\times X=U_1\cup\cdots\cup U_k$ such that for any $i=1,2,\ldots,k$ there exists a continuous section $s_i:U_i\to X^{[0,1]}$ of $e_2^X$ 
over $U_i$ (i.e., $e_2^X\circ s_i=incl_{U_i}$) and $\widetilde{s_i}\pitchfork Z$, where $\widetilde{s_i}\colon U_i\times [0,1]\to X$ is given by $\widetilde{s_i}(u)=s_i(u)(t)$ for any $u\in U_i$ and $t\in [0,1]$.     
\end{proposition}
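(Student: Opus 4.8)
The plan is to show the two quantities bound each other. One inequality is immediate: if $X \times X = U_1 \cup \cdots \cup U_k$ with sections $s_i$ whose adjoints $\widetilde{s_i}$ are transversal to $Z$, then in particular each $s_i$ is a continuous local section of $e_2^X$, so this is a motion planner witnessing $\mathrm{TC}(X) \le k$. Hence the ``transversal'' quantity is always $\ge \mathrm{TC}(X)$, and the content of the proposition is the reverse: any optimal motion planner can be upgraded so that each local section, when regarded as a homotopy $U_i \times [0,1] \to X$, is transversal to $Z$, without increasing the number of open sets and without destroying the boundary conditions $s_i(u)(0) = u_1$, $s_i(u)(1) = u_2$.

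So start with an optimal motion planner $\{(U_i, s_i)\}_{i=1}^{k}$, $k = \mathrm{TC}(X)$. Fix $i$ and set $f := \widetilde{s_i} \colon U_i \times [0,1] \to X$. The source $U_i \times [0,1]$ is a smooth manifold with boundary $\partial(U_i \times [0,1]) = U_i \times \{0,1\}$ (here I would note $U_i$ is an open subset of the smooth manifold $X \times X$, hence itself a smooth manifold without boundary, so $U_i \times [0,1]$ is a smooth manifold with boundary in the usual sense). First I would replace $f$ by a homotopic \emph{smooth} map rel nothing, using Lemma~\ref{approximation}: strictly speaking $f$ is only continuous, so I would first apply the Whitney approximation theorem to get a smooth map homotopic to $f$; but I must be careful that the homotopy can be taken rel $U_i \times \{0,1\}$ so that the endpoint conditions survive — this is the relative version of Whitney approximation and is the first point that needs care. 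Concretely, the restriction of $f$ to $U_i \times \{0,1\}$ is already smooth (it is $(u_1,u_2) \mapsto u_1$ and $(u_1,u_2)\mapsto u_2$, projections), so one applies the relative Whitney approximation theorem to obtain a smooth $f' \simeq f$ with $f'|_{U_i \times \{0,1\}} = f|_{U_i \times \{0,1\}}$.

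Next, the boundary map $\partial f' \colon U_i \times \{0,1\} \to X$ is the pair of coordinate projections, which is a submersion, hence automatically transversal to the codimension-$1$ submanifold $Z$ (a submersion is transversal to every submanifold). Therefore Lemma~\ref{transversal} applies and yields a smooth map $g \colon U_i \times [0,1] \to X$, homotopic to $f'$, with $\partial g = \partial f'$ and $g \pitchfork Z$. Setting $s_i'(u)(t) := g(u,t)$ we obtain a continuous local section of $e_2^X$ over $U_i$ (the endpoint conditions hold because $\partial g = \partial f' = \partial f$), and now $\widetilde{s_i'} = g \pitchfork Z$ by construction. Doing this for each $i$ in turn produces the desired covering with transversal adjoint sections, still of cardinality $k = \mathrm{TC}(X)$, proving the reverse inequality and hence equality.

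The main obstacle I anticipate is not the transversality step — that is handed to us by Lemma~\ref{transversal} — but the bookkeeping around the \emph{boundary}: ensuring that the Whitney smoothing is done rel $U_i \times \{0,1\}$ so that the resulting map is genuinely a section of the path fibration (i.e.\ still satisfies $g(u,j) = u_{j+1}$), and checking that $U_i \times [0,1]$ is a legitimate manifold with boundary to which Lemmas~\ref{approximation} and~\ref{transversal} apply. A secondary subtlety is that Lemma~\ref{approximation} as stated smooths a homotopy between two already-smooth maps, so to smooth the merely-continuous $f$ itself one invokes Whitney approximation directly (as cited, \cite[Theorem 6.26]{lee2012}) in its rel-form, rather than Lemma~\ref{approximation}; once $f$ is smooth, everything else is a direct application of the two cited lemmas.
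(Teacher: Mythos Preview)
Your proposal is correct and follows essentially the same route as the paper: smooth the homotopy $\widetilde{s_i}$ keeping its boundary values (the two projections), observe that $\pi_1\sqcup\pi_2$ is a submersion and hence transversal to $Z$, and then invoke the extension-of-transversality lemma to make the whole homotopy transversal without disturbing the boundary. Your only over-caution is the smoothing step: Lemma~\ref{approximation} as stated already does exactly the rel-boundary smoothing you want, since its conclusion is a smooth $G$ with $G_0=\pi_1$ and $G_1=\pi_2$; you do not need to appeal separately to a relative Whitney theorem.
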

\begin{proof}
    For $U\subset X\times X$ and $s:U\to X^{[0,1]}$ a continuous local algorithm, consider $\widetilde{s}:U\times [0,1]\to X,~\widetilde{s}((x,y),t)=s(x,y)(t)$. Note that, $\widetilde{s}$ is a continuos homotopy with $\widetilde{s}_0=\pi_1$ and $\widetilde{s}_1=\pi_2$, where $\pi_j$ is the projection to the $j$-th factor. By Lemma~\ref{approximation}, there exists a smooth homotopy $G:U\times [0,1]\to X$ with $G_0=\pi_1$ and $G_1=\pi_2$. Note that the boundary map $\partial G=\pi_1\sqcup \pi_2$ is transversal to $Z$, then by Lemma~\ref{transversal}, there exists a smooth map $F:U\times [0,1]\to X$ homotopic to $G$ such that $\partial F=\partial G$ (and thus, $F_0=\pi_1$ and $F_1=\pi_2$) and $F\pitchfork Z$. Then, the map $\sigma\colon U\to  X^{[0,1]}$ given by $\sigma(x,y)(t)=F((x,y),t)$ satisfies the conditions of the proposition. Thus, the proposition holds. 
\end{proof}

Note that, if $F:U\times [0,1]\to X$ is transversal to $Z$ with $U\subset X\times X$ does not implies that $F_u\colon [0,1]\to X,~t\mapsto F_u(t)=F(u,t)$, is transversal to $Z$ for any $u\in U$. To see this, we have the following example. 

\begin{example}\label{exam:non-transversal}
    Consider the pair $\left(\mathbb{R}^2,S^1\right)$ and the smooth map $F\colon \mathbb{R}^2\times \mathbb{R}^2\times[0,1]\to\mathbb{R}^2$ given by $F\left((C_1,C_2),t\right)=(1-t)C_1+tC_2$. Note that, $F\pitchfork S^1$ but $F_{\left((1,1),(-1,1)\right)}\colon [0,1]\to \mathbb{R}^2,~F_{\left((1,1),(-1,1)\right)}(t)=\left(1-2t,1\right)$, is not transversal to $S^1$.
\end{example}

Proposition~\ref{thm} says that Farber's topological complexity of $X$ coincides with the complexity of designing smooth homotopies $F:U\times [0,1]\to X$ with $F_0=\pi_1$ and $F_1=\pi_2$ such that $F\pitchfork Z$. However, Example~\ref{exam:non-transversal} motives the following definition. 

\begin{definition}\label{def:ttc}
The \textit{transversal topological complexity} TTC$(X,Z)$ of a path-connected smooth manifold (without boundary) $X$ with respect to a submanifold (without boundary) $Z$ with codimension 1 is the smallest positive integer TTC$(X,Z)=k$ for which  the product $X\times X$ is covered by $k$ open subsets $X\times X=U_1\cup\cdots\cup U_k$ such that for any $i=1,2,\ldots,k$ there exists a continuous section $s_i:U_i\to X^{[0,1]}$ of $e_2^X$ over $U_i$ (i.e., $e_2^X\circ s_i=incl_{U_i}$) and $s_i(x,y)\pitchfork_s Z$ for any $(x,y)\in U_i$. We call such a collection of local sections a \textit{transversal motion planner} with $k$ 
domains of continuity. If no such $k$ exists, we set TTC$(X,Z)=\infty$. 
\end{definition}

\begin{remark}
Note that $\text{TC}(X)\leq \text{TTC}(X,Z)$ for any smooth manifold $X$ and any submanifold $Z\subset X$ with codimension 1. 
\end{remark}

\begin{example}\label{exam:transversal-rn}
Consider the pair $(\mathbb{R}^{n}, \mathbb{R}^{n-1})$, with the latter understood as the submanifold $\mathbb{R}^{n-1} \times \{0\} \subset \mathbb{R}^{n}$. We claim that the transversal topological complexity of this pair is 1. Indeed, consider the (global) algorithm $s:\mathbb{R}^{n}\times \mathbb{R}^{n} \to (\mathbb{R}^{n})^{[0,1]}$ defined by $$s(C_1,C_2)(t)=\begin{cases}
        (1-2t)C_1+2te_n,&\hbox{for $0\leq t\leq 1/2$;}\\
       (2-2t)e_n+(2t-1)C_2,&\hbox{for $1/2\leq t\leq 1$;}\\
    \end{cases}$$ where $e_n=(0,\ldots,0,1)\in\mathbb{R}^{n}$. 
This map (see Figure~\ref{algorithm1}) is continuous and satisfies the transversal condition: If $s(C_1,C_2)(t)\in \mathbb{R}^{n-1}$ with $t\in (0,1)$, 
then $$s'(C_1,C_2)(t)=\begin{cases}
        2(e_n-C_1),&\hbox{for $0< t<1/2$;}\\
       2(C_2-e_n),&\hbox{for $1/2< t<1$;}\\
    \end{cases}$$ 
Hence, $s'(C_1,C_2)(t)\notin \mathbb{R}^{n-1}$. 
\end{example}

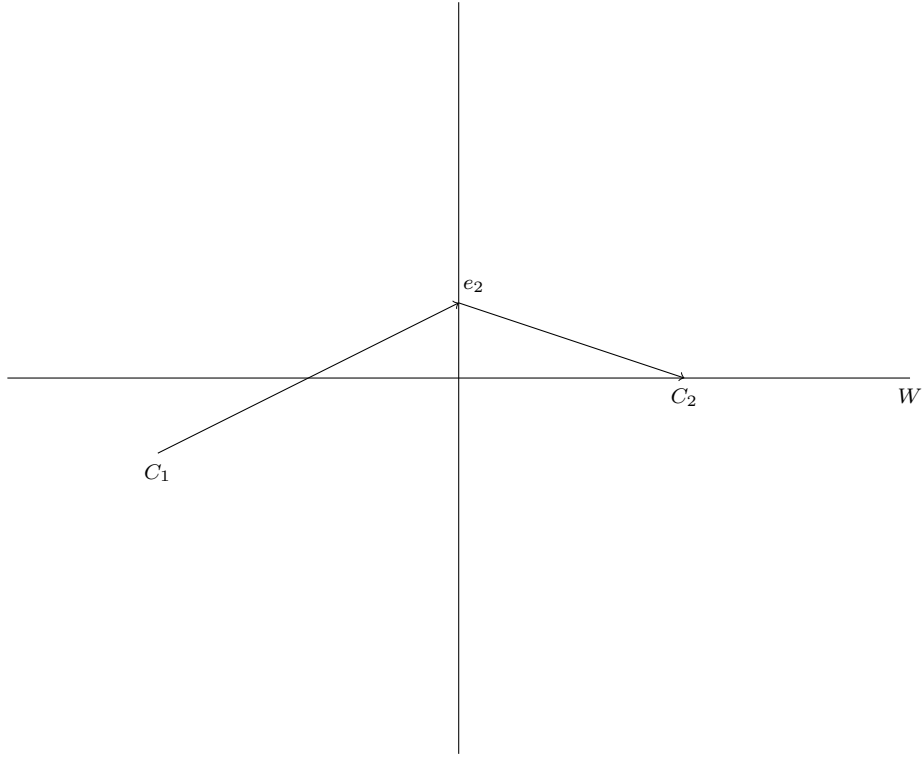
\begin{figure}[h]
 \centering
\begin{tikzpicture}[scale=1]  
\draw(-6,0)--(6,0);\node [below] at (6,0) {\tiny$W$}; 
\draw(0,-5)--(0,5);
\draw[->](-4,-1)--(0,1); \node [below] at (-4,-1) {\tiny$C_1$}; \node [above] at (0.2,1) {\tiny$e_2$};
\draw[->](0,1)--(3,0); \node [below] at (3,0) {\tiny$C_2$}; 
\end{tikzpicture}
\caption{The transversal algorithm $s(C_1,C_2)(t)$ in $\mathbb{R}^2$ with respect to the 1-codimension submanifold $W=\{(a,0):~a\in\mathbb{R}\}$.}
 \label{algorithm1}
\end{figure}

\begin{example}\label{exam:rd-spheres}
    For the pair $\left(\mathbb{R}^{d+1},\bigsqcup_{i=1}^nS^d_i(0)\right)$ where $S^d_i(0)$ is the $d$-dimensional sphere with center $0$ and radius $i$, $i=1,\ldots,n$, then we have that the transversal topological complexity $\text{TTC}(\mathbb{R}^{d+1},\bigsqcup_{i=1}^nS^d_i(0))=1$. To check this, we have that the map $s:\mathbb{R}^{d+1}\times \mathbb{R}^{d+1}\to \left(\mathbb{R}^{d+1}\right)^{[0,1]}$ given by $$s(C_1,C_2)(t)=\begin{cases}
        (1-2t)C_1,&\hbox{for $0\leq t\leq 1/2$;}\\
        (2t-1)C_2,&\hbox{for $1/2\leq t\leq 1$;}\\
    \end{cases}$$ defines a global continuous transversal algorithm in $\mathbb{R}^{d+1}$ with respect to $\bigsqcup_{i=1}^nS^d_i(0)$ (see Figure~\ref{f1}). 
\end{example}

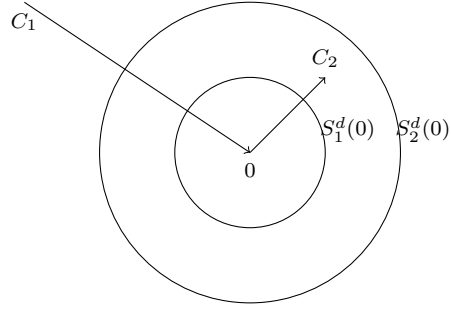
\begin{figure}[h!]
    \centering
\begin{tikzpicture}[scale=1]  
\draw (1,0) arc (0:360:1 and 1); \node [above] at (1.3,0) {\tiny$S^d_1(0)$};
\draw (2,0) arc (0:360:2 and 2); \node [above] at (2.3,0) {\tiny$S^d_2(0)$};
\draw[->](-3,2)--(0,0); \node [below] at (-3,2) {\tiny$C_1$}; \node [below] at (0,0) {\tiny$0$};
\draw[->](0,0)--(1,1); \node [above] at (1,1) {\tiny$C_2$};
\end{tikzpicture}
\caption{Global transversal algorithm in $\mathbb{R}^{d+1}$ with respect to $S^d_1(0)\sqcup S^d_2(0)$. }
\label{f1}
\end{figure}

Let $X$ be a smooth manifold, $Z\subset X$ be a 1-codimension submanifold and $x_0\in X\setminus Z$. We have the following statement.
 
 \begin{lemma}\label{carac}
 There exists a continuous transversal algorithm $s:X\times X\to X^{[0,1]}$ with respect to $Z$ if and only if there exists a continuous nulhomotopy $H:X\times [0,1]\to X$ with $H_0=1_X$, $H_1=\overline{x_0}$ and $H(x,-)\pitchfork_s Z$ for any $x\in X$.
\end{lemma}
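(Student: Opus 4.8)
The plan is to prove both implications by converting between a global transversal motion planner $s\colon X\times X\to X^{[0,1]}$ and a transversal contraction $H\colon X\times[0,1]\to X$ to the basepoint $x_0\in X\setminus Z$, using the standard correspondence between motion planners and contractions that underlies the fact $\mathrm{TC}(X)=1 \iff X$ is contractible, but keeping track of the semi-transversality condition throughout.

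First I would handle the forward direction. Given a continuous transversal algorithm $s$, define $H\colon X\times[0,1]\to X$ by following, for each $x$, the path from $x$ to $x_0$, i.e.\ $H(x,t)=s(x,x_0)(t)$. Then $H_0=1_X$, $H_1=\overline{x_0}$, and continuity is immediate from continuity of $s$ and the evaluation map. For the transversality condition, note that for fixed $x$ the path $t\mapsto H(x,t)$ is exactly $s(x,x_0)$, which is semi-transversal to $Z$ by hypothesis; hence $H(x,-)\pitchfork_s Z$ for every $x\in X$. So this direction is essentially a direct substitution.

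The converse is the substantive direction. Given such a nulhomotopy $H$, I would define for $(x,y)\in X\times X$ the path $s(x,y)$ that first traverses $H(x,-)$ from $x$ to $x_0$ on $[0,1/2]$ and then traverses the reverse path $H(y,-)$ backwards from $x_0$ to $y$ on $[1/2,1]$; concretely $s(x,y)(t)=H(x,2t)$ for $t\le 1/2$ and $s(x,y)(t)=H(y,2-2t)$ for $t\ge 1/2$. This is well defined (both pieces equal $x_0$ at $t=1/2$) and continuous in $(x,y,t)$, and it satisfies the endpoint conditions $s(x,y)(0)=x$, $s(x,y)(1)=y$. The point to verify is $s(x,y)\pitchfork_s Z$: suppose $s(x,y)(t_0)\in Z$ with $t_0\in(0,1)$. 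If $t_0<1/2$ then near $t_0$ the path is a reparametrization $t\mapsto H(x,2t)$ of $H(x,-)$ by the orientation-preserving affine map $t\mapsto 2t$, and since $2t_0\in(0,1)$ and $H(x,-)\pitchfork_s Z$, the path $H(x,-)$ is smooth at $2t_0$ with velocity not tangent to $Z$; the chain rule then gives that $s(x,y)$ is smooth at $t_0$ with derivative $2\,\partial_2 H(x,2t_0)\notin T_{s(x,y)(t_0)}Z$. The case $t_0>1/2$ is symmetric, using the affine reparametrization $t\mapsto 2-2t$ (orientation-reversing, but this only flips the sign of the velocity, which does not affect the condition of not lying in $T Z$). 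The only remaining case is $t_0=1/2$, but $s(x,y)(1/2)=x_0\notin Z$, so this does not occur; this is precisely where the hypothesis $x_0\in X\setminus Z$ is used. Hence $s$ is a global continuous transversal algorithm.

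The main obstacle I expect is purely bookkeeping at the junction point $t=1/2$ and at the possible non-smoothness there: one must be careful that semi-transversality is a pointwise condition that only constrains the path at parameters mapping \emph{into} $Z$, so the (potential) corner at $t=1/2$ is harmless exactly because $x_0\notin Z$; this is the role of the hypothesis and should be stated explicitly. A secondary point worth a sentence is that reparametrizing a semi-transversal path by a diffeomorphism of $[0,1]$ (or an affine map onto a subinterval) preserves semi-transversality — this follows from the chain rule as above and could be isolated as a small remark, but it is elementary enough to fold into the argument.
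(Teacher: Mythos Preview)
Your proposal is correct and follows essentially the same approach as the paper: both directions use exactly the constructions $H(x,t)=s(x,x_0)(t)$ and $s(x,y)(t)=H(x,2t)$ or $H(y,2-2t)$ according as $t\le 1/2$ or $t\ge 1/2$. Your write-up is in fact more detailed than the paper's, which merely remarks ``here we use that $x_0\notin Z$'' without spelling out the chain-rule verification or the junction analysis at $t=1/2$.
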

\begin{proof}
 Suppose that $s:X\times X\to X^{[0,1]}$ is a continuous transversal algorithm with respect to $Z$. The map $H:X\times [0,1]\to X$ given by $H(x,t)=s(x,x_0)(t)$ defines a continuous nulhomotopy with $H_0=1_X$, $H_1=\overline{x_0}$ and $H(x,-)\pitchfork_s Z$ for any $x\in X$.

Now, suppose that $H:X\times [0,1]\to X$ is a continuous nulhomotopy with $H_0=1_X$, $H_1=\overline{x_0}$ and $H(x,-)\pitchfork_s Z$ for any $x\in X$. The map $s:X\times X\to X^{[0,1]}$ given by \[s(x,y)(t)=\begin{cases}
        H(x,2t),& \hbox{for $0\leq t\leq 1/2$;}\\
        H(y,2-2t),& \hbox{for $1/2\leq t\leq 1$;}\\
    \end{cases}\] defines a continuous transversal algorithm with respect to $Z$ (here we use that $x_0\notin Z$).
\end{proof}

Lemma~\ref{carac} implies the notion of transversal LS category.

\begin{definition}\label{def:tcat}
    Let $X$ be a smooth manifold and $Z\subset X$ be a 1-codimension submanifold. The \textit{transversal LS category} of $X$ with respect to $Z$, denoted by $\text{Tcat}(X,Z)$, is the smallest positive integer $k$ for which the space $X$ is covered by $k$ open subsets $X=U_1\cup\cdots\cup U_k$ such that for each $i=1,2,\ldots,k$ there exists a continuous map $H:U_i\times [0,1]\to X$ with $H_0=incl_{U_i}$, $H_1=constant$ and $H(x,-)\pitchfork_s Z$ for any $x\in U_i$. If no such $k$ exists, we set Tcat$(X,Z)=\infty$. 
\end{definition}

Note that $\text{Tcat}(X,Z)\geq \text{cat}(X)$, where $\text{cat}(X)$ is the LS category of $X$. From~\cite{cornea2003lusternik}, the \textit{LS category} of $X$ is the least integer $k$ such that $X$ can be covered by $k$ open sets, all of which are contractible within $X$. 

\begin{example}\label{tcat}
Consider the pair $(\mathbb{R}^2,S^1_1\sqcup S^1_2)$, where $S^1_1$ is the $1$-sphere with center $O_1=(-2,0)$ and $S^1_2$ is the $1$-sphere with center $O_2=(2,0)$ (see Figure~\ref{figure:two-spheres}). We have that $\text{Tcat}(\mathbb{R}^2,S^1_1\sqcup S^1_2)=2$.
  We will prove $\text{Tcat}(\mathbb{R}^2,S^1_1\sqcup S^1_2)\leq2$, the lower bound is a technical exercise which we leave to the reader. Consider the open sets:
   \begin{align*}
       U_1&=\pi^{-1}\left(-\infty,1/2\right),\\
       U_2&=\pi^{-1}\left(-1/2,+\infty\right),
   \end{align*} where $\pi:\mathbb{R}^2\to \mathbb{R}$ is the natural projection $\pi(x,y)=(x,0)$. Note that $U_1\cup U_2=\mathbb{R}^2$. Moreover, for each $i=1,2$, we can consider the map $H_i:U_i\times [0,1]\to \mathbb{R}^2$ given by \begin{align*}
       H_i(x,t)&=(1-t)x+tO_i.
   \end{align*} Note that $H_i(x,0)=x$, $H(x,1)=O_i$ and $H_i(x,-)\pitchfork_s S^1_1\sqcup S^1_2$ for any $x\in U_i$. Thus, $\text{Tcat}(\mathbb{R}^2,S^1_1\sqcup S^1_2)\leq 2$ and therefore $\text{Tcat}(\mathbb{R}^2,S^1_1\sqcup S^1_2)=2$.
\end{example}

\begin{figure}[h!]
    \centering
\begin{tikzpicture}[scale=1]  
\draw (-1,0) arc (0:360:1 and 1); \node [above] at (-2,1) {\tiny$S^1_1$};  \node [above] at (-2,-0.2) {\tiny$(-2,0)$};
\draw (3,0) arc (0:360:1 and 1); \node [above] at (2,1) {\tiny$S^1_2$}; \node [above] at (2,-0.2) {\tiny$(2,0)$};
\end{tikzpicture}
\caption{The plane $\mathbb{R}^2$ with two disjoint copies of the $1$-dimensional sphere $S^1_1\sqcup S^1_2$.}
\label{figure:two-spheres}
\end{figure}
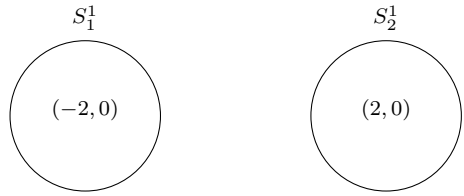

The following statement presents a lower bound for transversal complexity in terms of transversal LS category.

\begin{proposition}\label{lower-bound}
   Let $X$ be a smooth manifold and $Z\subset X$ be a 1-codimension submanifold. We have \[\text{Tcat}(X,Z)\leq \text{TTC}(X,Z).\] 
\end{proposition}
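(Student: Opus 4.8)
The plan is to mimic the classical proof that $\mathrm{cat}(X) \le \mathrm{TC}(X)$, adapting it to the transversal setting. Suppose $\mathrm{TTC}(X,Z) = k$, so there is an open cover $X \times X = U_1 \cup \cdots \cup U_k$ together with continuous sections $s_i \colon U_i \to X^{[0,1]}$ of $e_2^X$ such that $s_i(x,y) \pitchfork_s Z$ for every $(x,y) \in U_i$. Fix a basepoint $x_0 \in X \setminus Z$ (such a point exists since $Z$ has codimension $1$, hence is a proper closed subset of the connected manifold $X$). The idea is to restrict everything to the slice $X \times \{x_0\} \subset X \times X$, which is homeomorphic to $X$ via $x \mapsto (x, x_0)$.

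First I would set $V_i = \{ x \in X : (x, x_0) \in U_i \}$, the preimage of $U_i$ under the embedding $X \hookrightarrow X \times X$, $x \mapsto (x,x_0)$. Each $V_i$ is open in $X$, and since the $U_i$ cover $X \times X$ they in particular cover the slice, so $V_1 \cup \cdots \cup V_k = X$. Next, for each $i$ I would define $H_i \colon V_i \times [0,1] \to X$ by $H_i(x,t) = s_i(x,x_0)(t)$. This is continuous as a composition of continuous maps, and it satisfies $H_i(x,0) = s_i(x,x_0)(0) = x$ (so $H_i$ restricted to time $0$ is the inclusion $V_i \hookrightarrow X$) and $H_i(x,1) = s_i(x,x_0)(1) = x_0$, a constant. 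Moreover, for every $x \in V_i$ the path $H_i(x,-) = s_i(x,x_0)$ is exactly one of the paths in the transversal motion planner, so $H_i(x,-) \pitchfork_s Z$. Thus $\{V_i, H_i\}_{i=1}^k$ is precisely the data required by Definition~\ref{def:tcat}, and we conclude $\mathrm{Tcat}(X,Z) \le k = \mathrm{TTC}(X,Z)$.

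I expect this argument to be essentially routine: the only point that needs a sentence of care is the existence of the basepoint $x_0 \notin Z$ and the verification that the contracting homotopies $H_i$ inherit the semi-transversality condition directly from the motion planner — both of which are immediate. The case $\mathrm{TTC}(X,Z) = \infty$ is vacuous. One could alternatively phrase the proof via Lemma~\ref{carac} when $k = 1$, but the slice argument above handles all $k$ uniformly and is the natural analogue of the standard inequality $\mathrm{cat}(X) \le \mathrm{TC}(X)$, so that is the route I would take.
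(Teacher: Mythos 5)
Your proof is correct and is essentially identical to the paper's: both restrict the transversal motion planner to the slice $X\times\{x_0\}$ via $x\mapsto (x,x_0)$ and take $H_i(x,t)=s_i(x,x_0)(t)$. (The requirement $x_0\notin Z$ is actually unnecessary here, since the semi-transversality condition only constrains $t\in(0,1)$, but including it does no harm.)
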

\begin{proof}
Let $x_0\in X$ and consider the map $i_0\colon X\to X\times X$, $x\mapsto i_0(x)=(x,x_0)$. For $U\subset X\times X$ and $s\colon U\to X^{[0,1]}$ satisfying $e_2^X\circ s=incl_U$ and $s(x,y)\pitchfork_s Z$ for any $(x,y)\in U$, consider $V=i_0^{-1}(U)$ and the map $H:V\times [0,1]\to X$ given by $H(x,t)=s(x,x_0)(t)$ defines a continuous map with $H_0=incl_V$, $H_1=\overline{x_0}$ and $H(x,-)\pitchfork_s Z$ for any $x\in U$. Hence, we conclude that $\text{TTC}(X,Z)\geq \text{Tcat}(X,Z)$.  
\end{proof}

\begin{example}\label{exam:2-4}
   Consider the pair $(\mathbb{R}^2,S^1_1\sqcup S^1_2)$, where $S^1_1$ is the $1$-sphere with center $(-2,0)$ and $S^1_2$ is the $1$-sphere with center $(2,0)$. We have that the transversal topological complexity $2\leq\text{TTC}(\mathbb{R}^2,S^1_1\sqcup S^1_2)\leq 4$. In fact, the first inequality follows from Example~\ref{tcat} together with Proposition~\ref{lower-bound}. To check the second inequality, we can consider the open sets: 
   \begin{align*}
       A&=\bigcap_{i=1}^{4}\pi^{-1}\left(\mathbb{R}\setminus\{a_i\}\right),\\
       B&=\bigsqcup_{i=1}^{4}\pi^{-1}\left(a_i-1/4,a_i+1/4\right),\\
   \end{align*}
    where $\pi:\mathbb{R}^2\to \mathbb{R}$ is the natural projection $\pi(x,y)=(x,0)$ and $a_1=(-3,0)$, $a_2=(-1,0)$, $a_3=(1,0)$ and $a_4=(2,0)$. 
  
   Note that $A\cup B=\mathbb{R}^2$ and thus $\left(A\times A\right)\cup \left(A\times B\right)\cup\left(B\times A\right)\cup\left(B\times B\right)=\mathbb{R}^2\times \mathbb{R}^2$. Moreover, \begin{align*}
       A\times B&=\bigsqcup_{j=1}^{4}A\times \pi^{-1}\left(a_j-1/4,a_j+1/4\right),\\
       B\times A&=\bigsqcup_{i=1}^{4}\pi^{-1}\left(a_i-1/4,a_i+1/4\right)\times A,\\
       B\times B&=\bigsqcup_{i,j=1}^{4}\pi^{-1}\left(a_i-1/4,a_i+1/4\right)\times \pi^{-1}\left(a_j-1/4,a_j+1/4\right),\\
   \end{align*}
   
   The following maps \begin{align*}
       s\colon& A\times A\to \left(\mathbb{R}^2\right)^{[0,1]},\\
       s_{A,j}\colon& A\times \pi^{-1}\left(a_j-1/4,a_j+1/4\right)\to \left(\mathbb{R}^2\right)^{[0,1]},\\
       s_{i,A}\colon& \pi^{-1}\left(a_i-1/4,a_i+1/4\right)\times A\to \left(\mathbb{R}^2\right)^{[0,1]},\\
       s_{i,j}\colon& \pi^{-1}\left(a_i-1/4,a_i+1/4\right)\times \pi^{-1}\left(a_j-1/4,a_j+1/4\right)\to \left(\mathbb{R}^2\right)^{[0,1]},\\
   \end{align*} given by \begin{align*}
      s(C_1,C_2)(t)&=\begin{cases}
     (1-3t)C_1+3t\pi(C_1),&\hbox{ for $0\leq t\leq 1/3$;}\\  
     (2-3t)\pi(C_1)+(3t-1)\pi(C_2),&\hbox{ for $1/3\leq t\leq 2/3$;}\\  
     (3-3t)\pi(C_2)+(3t-2)C_2,&\hbox{ for $2/3\leq t\leq 1$;}\\  
   \end{cases} \\
   s_{A,j}(C_1,C_2)(t)&=\begin{cases}
     (1-3t)C_1+3t\pi(C_1),&\hbox{ for $0\leq t\leq 1/3$;}\\  
     (2-3t)\pi(C_1)+(3t-1)d_j,&\hbox{ for $1/3\leq t\leq 2/3$;}\\  
     (3-3t)d_j+(3t-2)C_2,&\hbox{ for $2/3\leq t\leq 1$;}\\  
   \end{cases} \\
   s_{i,A}(C_1,C_2)(t)&=\begin{cases}
     (1-3t)C_1+3td_i,&\hbox{ for $0\leq t\leq 1/3$;}\\  
     (2-3t)d_i+(3t-1)\pi(C_2),&\hbox{ for $1/3\leq t\leq 2/3$;}\\  
     (3-3t)\pi(C_2)+(3t-2)C_2,&\hbox{ for $2/3\leq t\leq 1$;}\\  
   \end{cases} \\
   s_{i,j}(C_1,C_2)(t)&=\begin{cases}
     (1-3t)C_1+3td_i,&\hbox{ for $0\leq t\leq 1/3$;}\\  
     (2-3t)d_i+(3t-1)d_j,&\hbox{ for $1/3\leq t\leq 2/3$;}\\  
     (3-3t)d_j+(3t-2)C_2,&\hbox{ for $2/3\leq t\leq 1$;}\\  
   \end{cases}
   \end{align*} where $d_1=d_2=(-2,0)$ and $d_3=d_4=(2,0)$, define local continuous transversal algorithms with respect to $S^1_1\sqcup S^1_2$. We have thus constructed a transversal motion planner in $\mathbb{R}^2$ with respect to $S^1_1\sqcup S^1_2$ having $4$ regions of continuity $A\times A, A\times B, B\times A, B\times B$. 
\end{example}

\begin{figure}[h!]
    \centering
\begin{tikzpicture}[scale=1]  
\draw (-1,0) arc (0:360:1 and 1); \node [above] at (-2,1) {\tiny$S^1_1$};
\draw (3,0) arc (0:360:1 and 1); \node [above] at (2,1) {\tiny$S^1_2$};
\draw (-4,0)--(4,0);
\draw[dashed](-3,-2)--(-3,2);\node [below] at (-3.1,0) {\tiny$a_1$};
\draw[dashed](-1,-2)--(-1,2);\node [below] at (-0.9,0) {\tiny$a_2$};
\draw[dashed](3,-2)--(3,2);\node [below] at (3.1,0) {\tiny$a_4$};
\draw[dashed](1,-2)--(1,2);\node [below] at (0.9,0) {\tiny$a_3$};
\draw[->](-2.5,2)--(-2.5,0); \node [above] at (-2.5,2) {\tiny$C_1$}; 
\draw[->](-2.5,0)--(1.5,0); 
\draw[->](1.5,0)--(1.5,-2); \node [below] at (1.5,-2) {\tiny$C_2$};
\end{tikzpicture}
\caption{The local transversal algorithm $s\colon A\times A\to \left(\mathbb{R}^2\right)^{[0,1]}$.}
\label{figure:algorithm-s}
\end{figure}

\begin{figure}[h!]
    \centering
\begin{tikzpicture}[scale=1.5]  
\draw (-1,0) arc (0:360:1 and 1); \node [above] at (-2,1) {\tiny$S^1_1$};
\draw (3,0) arc (0:360:1 and 1); \node [above] at (2,1) {\tiny$S^1_2$};
\draw (-4,0)--(4,0);
\draw[dashed](-13/4,-2)--(-13/4,2);\node [below] at (-13/4-0.2,0) {\tiny$a_1-1/4$};
\draw[dashed](-11/4,-2)--(-11/4,2);\node [below] at (-11/4+0.2,0) {\tiny$a_1+1/4$};
\draw[dashed](-5/4,-2)--(-5/4,2);\node [below] at (-5/4-0.2,0) {\tiny$a_2-1/4$};
\draw[dashed](-3/4,-2)--(-3/4,2);\node [below] at (-3/4+0.2,0) {\tiny$a_2+1/4$};
\draw[dashed](11/4,-2)--(11/4,2);\node [below] at (11/4-0.2,0) {\tiny$a_4-1/4$};
\draw[dashed](13/4,-2)--(13/4,2);\node [below] at (13/4+0.2,0) {\tiny$a_4+1/4$};
\draw[dashed](3/4,-2)--(3/4,2);\node [below] at (3/4-0.2,0) {\tiny$a_3-1/4$};
\draw[dashed](5/4,-2)--(5/4,2);\node [below] at (5/4+0.2,0) {\tiny$a_3+1/4$};
\draw[->](-2.8,2)--(-2.8,0); \node [above] at (-2.8,2) {\tiny$C_1$}; 
\draw[->](-2.8,0)--(-2,0); \node [above] at (-2,0) {\tiny$d_1=d_2$};\node [above] at (2,0) {\tiny$d_3=d_4$};
\draw[->](-2,0)--(-3,-2); \node [below] at (-3,-2) {\tiny$C_2$};
\end{tikzpicture}
\caption{The local transversal algorithm $s_{A,1}$.}
\label{figure:algorithm-s-a-j}
\end{figure}
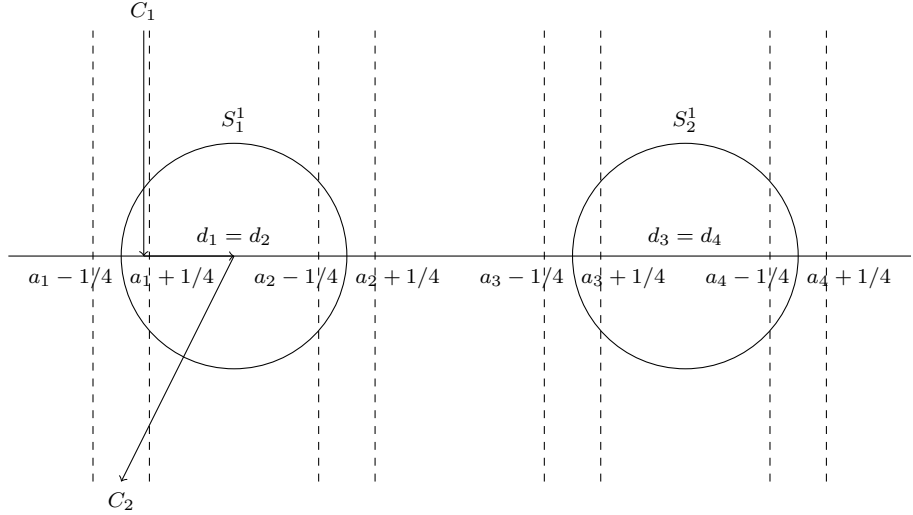

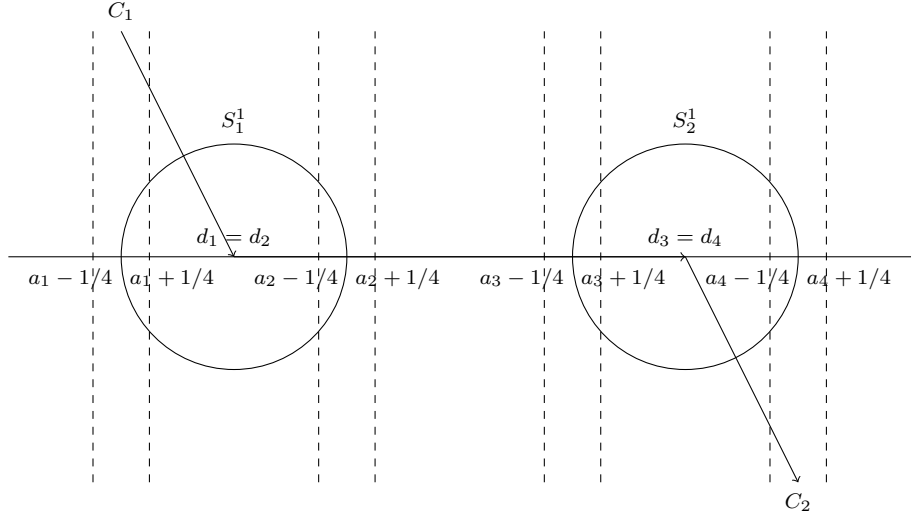
\begin{figure}[h!]
    \centering
\begin{tikzpicture}[scale=1.5]  
\draw (-1,0) arc (0:360:1 and 1); \node [above] at (-2,1) {\tiny$S^1_1$};
\draw (3,0) arc (0:360:1 and 1); \node [above] at (2,1) {\tiny$S^1_2$};
\draw (-4,0)--(4,0);
\draw[dashed](-13/4,-2)--(-13/4,2);\node [below] at (-13/4-0.2,0) {\tiny$a_1-1/4$};
\draw[dashed](-11/4,-2)--(-11/4,2);\node [below] at (-11/4+0.2,0) {\tiny$a_1+1/4$};
\draw[dashed](-5/4,-2)--(-5/4,2);\node [below] at (-5/4-0.2,0) {\tiny$a_2-1/4$};
\draw[dashed](-3/4,-2)--(-3/4,2);\node [below] at (-3/4+0.2,0) {\tiny$a_2+1/4$};
\draw[dashed](11/4,-2)--(11/4,2);\node [below] at (11/4-0.2,0) {\tiny$a_4-1/4$};
\draw[dashed](13/4,-2)--(13/4,2);\node [below] at (13/4+0.2,0) {\tiny$a_4+1/4$};
\draw[dashed](3/4,-2)--(3/4,2);\node [below] at (3/4-0.2,0) {\tiny$a_3-1/4$};
\draw[dashed](5/4,-2)--(5/4,2);\node [below] at (5/4+0.2,0) {\tiny$a_3+1/4$};
\draw[->](-3,2)--(-2,0); \node [above] at (-3,2) {\tiny$C_1$}; 
\draw[->](-2,0)--(2,0); \node [above] at (-2,0) {\tiny$d_1=d_2$};\node [above] at (2,0) {\tiny$d_3=d_4$};
\draw[->](2,0)--(3,-2); \node [below] at (3,-2) {\tiny$C_2$};
\end{tikzpicture}
\caption{The local transversal algorithm $s_{1,4}$.}
\label{figure:algorithm-s-i-j}
\end{figure}

\newpage 
\medskip The following result is technical. 

\begin{lemma}\rm{Let $h:M\to N$ be a diffeomorphism. If $W\subset N$ is a submanifold with codimension $1$, then $Z=h^{-1}(W)\subset M$ is also a submanifold with codimension $1$.}
\end{lemma}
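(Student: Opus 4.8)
The plan is to use the standard fact that diffeomorphisms preserve the local structure of submanifolds. Recall that a subset $W \subset N$ is a submanifold of codimension $1$ precisely when every point $w \in W$ admits a chart $(V, \psi)$ of $N$ with $\psi(V) \subset \mathbb{R}^n$ open and $\psi(V \cap W) = \psi(V) \cap (\mathbb{R}^{n-1} \times \{0\})$, i.e. $W$ is locally cut out as the zero set of the last coordinate function. (Here $n = \dim N = \dim M$, since $h$ is a diffeomorphism.) So I would first fix an arbitrary point $z \in Z = h^{-1}(W)$ and set $w = h(z) \in W$.

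Next I would transport a slice chart for $W$ near $w$ back through $h$. Pick a slice chart $(V, \psi)$ of $N$ adapted to $W$ around $w$ as above. Then $U := h^{-1}(V)$ is an open neighborhood of $z$ in $M$, and $\varphi := \psi \circ h|_U \colon U \to \psi(V)$ is a composition of diffeomorphisms, hence a diffeomorphism onto the open set $\psi(V) \subset \mathbb{R}^n$; in particular $(U, \varphi)$ is a smooth chart of $M$. It remains to check the slice condition: for $x \in U$ we have $x \in Z \iff h(x) \in W \iff \psi(h(x)) \in \mathbb{R}^{n-1} \times \{0\}$, so $\varphi(U \cap Z) = \varphi(U) \cap (\mathbb{R}^{n-1} \times \{0\})$. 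Thus $(U, \varphi)$ is a slice chart of $M$ adapted to $Z$ around $z$.

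Since $z \in Z$ was arbitrary, $Z$ is an embedded submanifold of $M$, and the slice charts constructed have the last coordinate vanishing on $Z$, so $\dim Z = n - 1$ and $Z$ has codimension $1$ in $M$. (One should also note $Z$ is nonempty iff $W$ is, but the codimension statement is the content regardless; if $W = \emptyset$ the claim is vacuous or trivial.) I would also remark that $Z$ inherits the subspace topology from $M$ because $h$ is in particular a homeomorphism, which is needed for $Z$ to be an embedded — not merely immersed — submanifold.

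There is no real obstacle here; the statement is, as the authors say, purely technical, and the only thing to be careful about is choosing the right characterization of "submanifold of codimension $1$" to work with — the slice-chart (local normal form) characterization makes the pullback argument immediate, whereas working from, say, a regular-value description of $W$ would require locally extending the defining submersion and composing with $h$, which amounts to the same thing with more bookkeeping. I would therefore open with the slice-chart definition to keep the proof to a few lines.
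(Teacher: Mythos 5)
Your argument is correct: the slice-chart characterization of embedded submanifolds makes the pullback through the diffeomorphism immediate, and your verification that $\varphi = \psi \circ h|_U$ is again a slice chart for $Z = h^{-1}(W)$ is complete, including the dimension count and the embedded-versus-immersed caveat. Note, however, that the paper offers no proof of this lemma at all — it is stated and flagged as technical — so there is nothing on the paper's side to compare against; your slice-chart route is the standard one and would be a perfectly good proof to supply in its place.
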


Then, the next statement shows an explicit construction of transversal algorithms through diffeomorphisms. 

\begin{proposition}\label{prop:difeo}
Let $h\colon M\to N$ be a diffeomorphism. If $W\subset N$ is a submanifold with codimension $1$, then $\text{TTC}(N;W)=\text{TTC}(M;h^{-1}(W))$.
\end{proposition}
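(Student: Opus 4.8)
The plan is to show that a diffeomorphism $h\colon M\to N$ induces a bijection between transversal motion planners on $M$ with respect to $Z=h^{-1}(W)$ and transversal motion planners on $N$ with respect to $W$, preserving the number of domains of continuity; the equality of the two invariants then follows immediately. By symmetry (a diffeomorphism has a smooth inverse, and $h(Z)=W$), it suffices to establish the inequality $\text{TTC}(N;W)\leq\text{TTC}(M;h^{-1}(W))$, since applying the same argument to $h^{-1}$ gives the reverse inequality.

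So suppose $\text{TTC}(M;Z)=k$ and let $M\times M=V_1\cup\cdots\cup V_k$ together with continuous sections $\sigma_i\colon V_i\to M^{[0,1]}$ of $e_2^M$ with $\sigma_i(x,y)\pitchfork_s Z$ for all $(x,y)\in V_i$. First I would push these data forward along $h$. Define $U_i=(h\times h)(V_i)\subset N\times N$; since $h\times h$ is a homeomorphism these are open and cover $N\times N$. For the sections, set $s_i\colon U_i\to N^{[0,1]}$ by $s_i(a,b)(t)=h\bigl(\sigma_i(h^{-1}(a),h^{-1}(b))(t)\bigr)$, i.e.\ conjugate by $h$ on the path space. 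Continuity of $s_i$ follows since post-composition with $h$ is continuous on path spaces and $h^{-1}\times h^{-1}$ is continuous. The endpoint condition $e_2^N\circ s_i=\text{incl}_{U_i}$ is a direct check: $s_i(a,b)(0)=h(\sigma_i(h^{-1}a,h^{-1}b)(0))=h(h^{-1}a)=a$ and similarly at $t=1$.

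The substantive step is verifying the semi-transversality condition $s_i(a,b)\pitchfork_s W$. Let $\Gamma=s_i(a,b)=h\circ\gamma$ where $\gamma=\sigma_i(h^{-1}a,h^{-1}b)$, and suppose $\Gamma(t_0)\in W$ with $t_0\in(0,1)$. Then $\gamma(t_0)=h^{-1}(\Gamma(t_0))\in h^{-1}(W)=Z$, so by hypothesis $\gamma$ is smooth at $t_0$ and $\gamma'(t_0)\notin T_{\gamma(t_0)}Z$. Since $h$ is a diffeomorphism, $\Gamma=h\circ\gamma$ is smooth at $t_0$ and by the chain rule $\Gamma'(t_0)=dh_{\gamma(t_0)}(\gamma'(t_0))$. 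Now $dh_{\gamma(t_0)}$ is a linear isomorphism $T_{\gamma(t_0)}M\to T_{\Gamma(t_0)}N$ carrying $T_{\gamma(t_0)}Z$ onto $T_{\Gamma(t_0)}W$ (because $h$ restricts to a diffeomorphism $Z\to W$, as $h(Z)=W$). Hence $\gamma'(t_0)\notin T_{\gamma(t_0)}Z$ implies $\Gamma'(t_0)=dh_{\gamma(t_0)}(\gamma'(t_0))\notin T_{\Gamma(t_0)}W$, which is exactly $\Gamma\pitchfork_s W$ at $t_0$. This shows $\{U_i,s_i\}$ is a transversal motion planner on $N$ with respect to $W$ with $k$ domains, so $\text{TTC}(N;W)\leq k=\text{TTC}(M;Z)$.

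I expect the main obstacle — really the only point needing care — to be the bookkeeping around the tangent spaces: one must invoke that $h$ maps $Z$ diffeomorphically onto $W$ (using the preceding lemma, $h(Z)=W$ since $Z=h^{-1}(W)$ and $h$ is bijective) so that $dh$ identifies $T Z$ with $T W$ fiberwise. Everything else is the routine transport of open covers and sections along a homeomorphism of path spaces. A minor point worth stating explicitly is that path-connectedness of $N$ (required in Definition~\ref{def:ttc}) follows from that of $M$ since $h$ is a homeomorphism, and likewise $W$ is a submanifold without boundary of codimension $1$ by the lemma, so both sides of the claimed equality are well-defined.
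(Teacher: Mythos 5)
Your proof is correct and follows essentially the same approach as the paper: transport a transversal motion planner across the diffeomorphism (conjugating sections on path spaces by $h$ and $h^{-1}$), verify the semi-transversality condition via the chain rule and the fact that $dh$ carries $TZ$ isomorphically onto $TW$ fiberwise, and obtain the reverse inequality by applying the same argument to $h^{-1}$. The only cosmetic difference is the direction you choose to prove first — you push a planner forward from $M$ to $N$, whereas the paper pulls one back from $N$ to $M$ — but the content is identical.
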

\begin{proof}
Let $Z=h^{-1}(W)\subset M$. For $U\subset N\times N$ and $s\colon U\to N^{[0,1]}$ satisfying $e_2^N\circ s=incl_U$ and $s(x,y)\pitchfork_s W$ whenever $(x,y)\in U$, consider $V=\left(h\times h\right)^{-1}(U)\subset M\times M$, the map $\sigma\colon V\to M^{[0,1]}$ given by $\sigma=(h^{-1})_\#\circ s\circ \left(h\times h\right)_{\mid V}$ defines a local section of $e_2^M$, where $(h^{-1})_\#:N^{[0,1]}\to M^{[0,1]}$ is the induced map of $h^{-1}$, i.e., $(h^{-1})_\#(\alpha)=h^{-1}\circ\alpha$. 

For any $(x,y)\in V$ we will show that $\sigma(x,y)\pitchfork_s Z$. First, note that $(h(x),h(y))\in U$ and thus $\Gamma = s(h(x),h(y))$ is such that $\Gamma \pitchfork_s W$. Also, note that $\sigma(x,y)=h^{-1}\circ \Gamma$ and $\sigma(x,y)(t)=h^{-1}\left(\Gamma(t)\right)$ for any $t\in [0,1]$. Suppose that $\sigma(x,y)(t_0)\in Z$ for some $t_0\in (0,1)$, then $\Gamma(t_0)\in W$ and so  $\Gamma(t)$ is smooth for $t\in (0,1)$, and $\Gamma'(t_0)\notin T_{\Gamma(t_0)}W$. Then $\sigma(x,y)(t)$ is smooth in $t_0\in (0,1)$ and $\sigma'(x,y)(t_0)=(d h^{-1})_{\Gamma(t_0)}\left(\Gamma'(t_0)\right)\notin T_{\sigma(x,y)(t_0)}Z$. Therefore, $\text{TTC}(N;W)\geq \text{TTC}(M;h^{-1}(W))$.

The inequality $\text{TTC}(M;h^{-1}(W))\geq \text{TTC}(N;W)$ follows from the first part applying to $h^{-1}\colon N\to M$ with $Z\subset M$.
\end{proof}

\begin{example}\label{exam:concrete-transversal-alg}
    Consider the diffeomorphism $h\colon \mathbb{R}^2\to \mathbb{R}^2,~h(x,y)=(x,y-x^2)$, whose inverse $f\colon \mathbb{R}^2\to \mathbb{R}^2$ is given by $f(a,b)=(a,a^2+b)$. Set $W=\{(a,b)\in\mathbb{R}^2:~b=0\}$, then $Z=f(W)=\{(x,y)\in\mathbb{R}^2:~y=x^2\}$ is the parabola. From Example~\ref{exam:transversal-rn}, we have that the map $s\colon\mathbb{R}^2\times \mathbb{R}^2\to \left(\mathbb{R}^2\right)^{[0,1]}$ given by $$s(C_1,C_2)(t)=\begin{cases}
        (1-2t)C_1+2te_2,&\hbox{for $0\leq t\leq 1/2$;}\\
       (2-2t)e_2+(2t-1)C_2,&\hbox{for $1/2\leq t\leq 1$;}\\
    \end{cases}$$ where $e_2=(0,1)$, is a transversal continuous algorithm with respect to the submanifold $W$. Then, by Proposition~\ref{prop:difeo}, the map $\sigma\colon\mathbb{R}^2\times \mathbb{R}^2\to \left(\mathbb{R}^2\right)^{[0,1]}$ given by, for $B_1=(x_1,y_1)$ and $B_2=(x_2,y_2)$: \begin{align*}
        \sigma(B_1,B_2)(t)&=f\left(s(h(B_1),h(B_2))(t)\right)\\
        &=\begin{cases}
        f\left((1-2t)h(B_1)+2te_2\right),&\hbox{for $0\leq t\leq 1/2$;}\\
       f\left((2-2t)e_2+(2t-1)h(B_2)\right),&\hbox{for $1/2\leq t\leq 1$;}\\
    \end{cases}\\
        &=\begin{cases}
        f\left((1-2t)(x_1,y_1-x_1^2)+2te_2\right),&\hbox{for $0\leq t\leq 1/2$;}\\
       f\left((2-2t)e_2+(2t-1)(x_2,y_2-x_2^2)\right),&\hbox{for $1/2\leq t\leq 1$;}\\
    \end{cases}\\
    &=\begin{cases}
        f\left((1-2t)x_1,(1-2t)(y_1-x_1^2)+2t\right),&\hbox{for $0\leq t\leq 1/2$;}\\
       f\left((2t-1)x_2,(2-2t)+(2t-1)(y_2-x_2^2)\right),&\hbox{for $1/2\leq t\leq 1$;}\\
    \end{cases}\\
     &=\begin{cases}
        \left((1-2t)x_1,(1-2t)^2x_1^2+(1-2t)(y_1-x_1^2)+2t\right),&\hbox{for $0\leq t\leq 1/2$;}\\
       \left((2t-1)x_2,(2t-1)^2x_2^2+(2-2t)+(2t-1)(y_2-x_2^2)\right),&\hbox{for $1/2\leq t\leq 1$;}\\
    \end{cases},\\   
    \end{align*} defines a transversal algorithm in $\mathbb{R}^2$ with respect to the parabola $Z$.
\end{example}

  \begin{figure}[h!]
      \centering
      \includegraphics[scale=3]{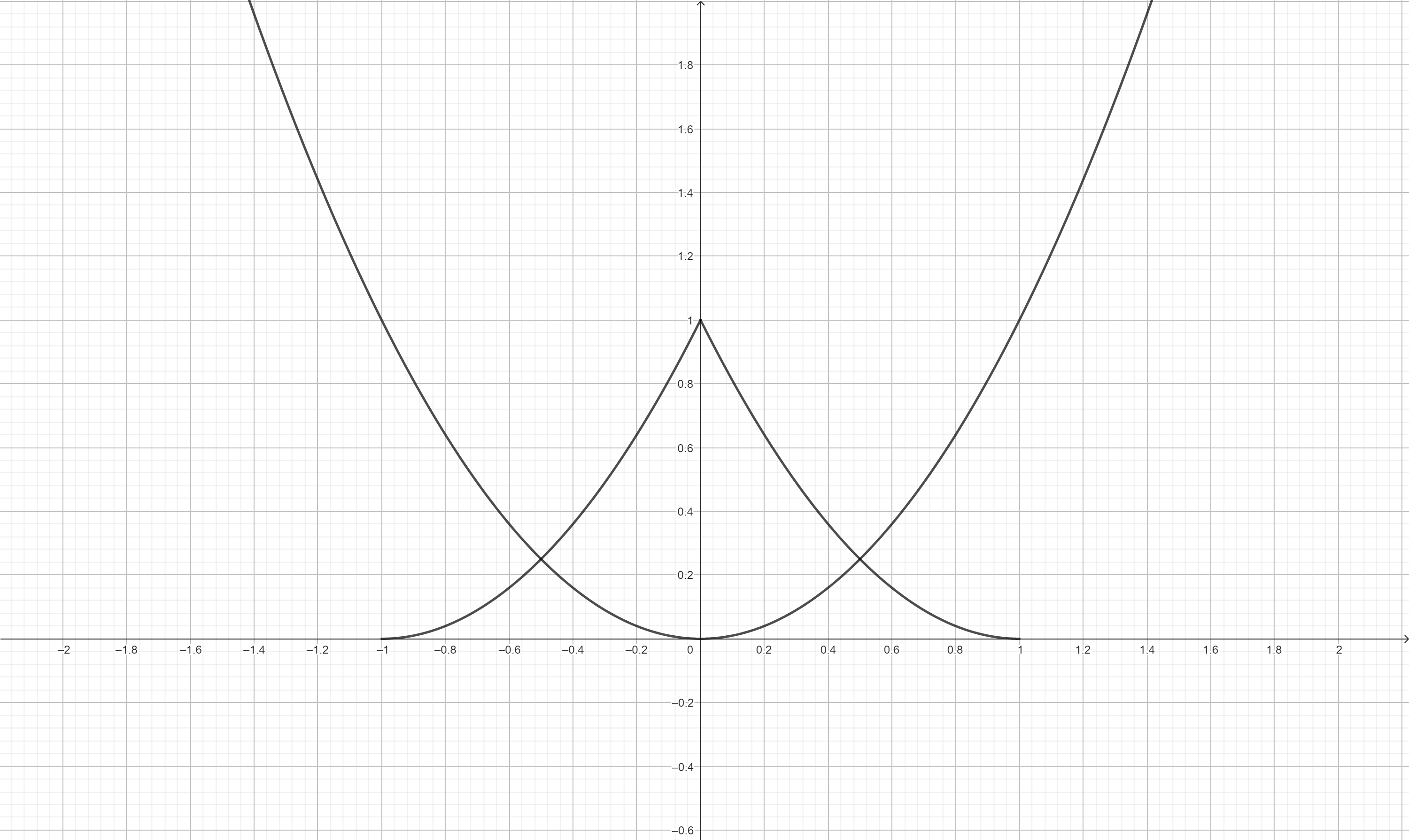}
      \caption{For $B_1=(-1,0)$ and $B_2=(1,0)$, the route given by the transversal algorithm $\sigma$ is $\sigma(B_1,B_2)(t)=\left(-(1-2t),(1-2t)^2+4t-1\right)$ for $0\leq t\leq 1/2$ and $\sigma(B_1,B_2)(t)=\left(2t-1,(2t-1)^2-4t+3\right)$ for $1/2\leq t\leq 1$.}
      \label{fig:my_label}
  \end{figure}

\bibliographystyle{plain}

\end{document}